\newtheorem{theorem}{Theorem}[section]
\newtheorem{lemma}[theorem]{Lemma}
\newtheorem{Def}[theorem]{Definition}
\newtheorem*{KC}{Korselt's Criterion}
\newtheorem*{Lem}{Lehmer's Question}
\newcommand{\rad}[1]{\text{rad}(#1)}
\newcommand{\Z}{\mathbb{Z}}
\author[N. McNew]{Nathan McNew}
 \author[T. Wright]{Thomas Wright}
\address{Department of Mathematics\\ Towson University, 7800 York Road, Towson, MD 21204}
 \email{nmcnew@towson.edu}
\address{Department of Mathematics\\Wofford College, 429 N. Church St., Spartanburg, SC 29302}
 \email{wrighttj@wofford.edu}
\title{Infinitude of $k$-Lehmer numbers which are not Carmichael}
\begin{document}
\begin{abstract}
In this paper, we prove that there are infinitely many $n$ for which $rad(\varphi(n))|n-1$ but $n$ is not a Carmichael number.  Additionally, we prove that for any $k\geq 3$, there exist infinitely many $n$ such that $\varphi(n)|(n-1)^k$ but $\varphi(n)\nmid (n-1)^{k-1}$.  The constructs that we consider here are generalizations of Carmichael and Lehmer numbers, respectively, that were first formulated by Grau and Oller-Marc\'{e}n \cite{GOM}.
\end{abstract}
\maketitle

\section{Introduction}
In 1932, D.H. Lehmer \cite{Leh} asked the following question:

\begin{Lem} Do there exist positive composite integers, $n$, for which $\varphi(n)|n-1$?\end{Lem}

Here $\varphi(n)$ denotes the Euler totient function of $n$.  Note that any prime number trivially satisfies Lehmer's condition.  Composite $n$ satisfying this condition are said to be \textit{Lehmer numbers}, however no such numbers are known. If such a number, $n$ were to exist, it is known that $n$ would have to be greater than $10^{30}$ and that $n$ would have at least 14 prime factors \cite{CH}.  There is currently some disagreement as to whether one should expect Lehmer numbers to exist at all.

In an attempt to better understand the relationship between $\varphi(n)$ and $n-1$, Grau and Oller-Marc\'{e}n \cite{GOM} suggested the following weakening of the Lehmer condition:

\begin{Def} A composite natural number $n$ is called a \underline{$k$-Lehmer number} if $\varphi(n)|(n-1)^k$.  We denote the set of $k$-Lehmer numbers by $L_k$.\end{Def}

The idea of a Lehmer number can also be weakened in the following manner:

\begin{Def} Let $\kappa (n)=\rad{\varphi(n)}$, where $\rad{m}$ denotes the largest squarefree divisor of $m$.  We will call a composite natural number $n$ a \underline{radimichael number} if $$\kappa(n)|n-1.$$
\end{Def}
Note that an integer $n$ is a radimichael number if and only if it is a $k$-Lehmer number for some value of $k$.  The study of Lehmer numbers and their generalizations is closely connected to the study of Carmichael numbers, the pseudoprimes to the Fermat primality test:
\begin{Def} A composite number $n$ with the property that $a^n\equiv a$ (mod $n$) for every $a\in \mathbb Z$ is called a \underline{Carmichael number}.
\end{Def}
In 1899 Korselt \cite{Ko} gave an equivalent characterization of Carmichael numbers which makes their connection with Lehmer numbers more apparent. Define $\lambda(n)$ to be the Carmichael lambda function of $n$, the smallest integer such that $a^{\lambda(n)}\equiv 1$ (mod $n$) for every integer $a$ coprime to $n$. (Note that $\varphi(n)$ gives the order of the group $\Z/n\Z$, while $\lambda(n)$ gives the greatest order of any element of that group.)

\begin{KC}  A composite natural number $n$ is Carmichael number if and only if it is square-free and $\lambda(n)|n-1$.
\end{KC}

Let us adopt the notation that $\kappa(n)=\rad{\varphi(n)}$.  Since $\kappa(n)|\lambda(n)$, it is clear that any Carmichael number is also a radimichael number (hence the name).  However, the converse is not necessarily true; for example, 85 is a radimichael number that isn't Carmichael.  In this paper, we study a subset of the radimichael and $k$-Lehmer numbers which are not Carmichael.

\section{Results}
Because the radimichael numbers satisfy a substantially weaker condition than Carmichael numbers, it would be reasonable to expect there to be far more radimichael numbers than Carmichael numbers.  This expectation has not been borne out by heuristics or upper bounds, however.  The first author \cite{McN} proved that the best known upper bound for the count, $C(x)$, of the Carmichael numbers up to $x$,
\[C(x) \leq x^{1-(1+o(1))\log \log \log x/\log \log x }\]
as $x \to \infty$, holds for radimichael numbers as well.  Furthermore, Pomerance \cite{Po} argues that this bound is heuristically tight.  Regardless, it might be reasonable to expect that radimichael numbers would be easier to find than Carmichael numbers.

It has been known since 1994 \cite{AGP} that there are infinitely many Carmichael numbers.  From this, it follows trivially that there are also infinitely many radimichael numbers.  However, while it has been conjectured by the first author \cite{McN} that there are infinitely many radimichael numbers which are not Carmichael, this has not previously been demonstrated.  In this paper, using the results of Maynard and Tao on primes in tuples, we resolve this conjecture and in particular prove the following:





\begin{theorem} \label{thm:infrad}
There are infinitely many $n$ for which $\kappa(n)|n-1$ but $\lambda(n)\nmid n-1$.  Moreover, the number of radimichael numbers up to $x$ is at least $x^{\frac 12+o(1)}$ as $x \to \infty$.
\end{theorem}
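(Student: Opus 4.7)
The plan exploits the fact that any Carmichael number has at least three distinct prime factors. (For $n=pq$ with distinct primes, Korselt's condition $p-1\mid n-1$ together with $pq-1\equiv q-1\pmod{p-1}$ forces $p-1\mid q-1$ and symmetrically $q-1\mid p-1$, whence $p=q$.) Consequently every squarefree semiprime $n=pq$ with $p\ne q$ is automatically not Carmichael, so the first assertion of the theorem reduces to producing infinitely many radimichael semiprimes.

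For such a semiprime $n=pq$, the same congruence analysis shows that $\kappa(n)\mid n-1$ is equivalent to the symmetric pair of conditions $\rad{p-1}\mid q-1$ and $\rad{q-1}\mid p-1$, i.e.\ to $\rad{p-1}=\rad{q-1}$: the numbers $p-1$ and $q-1$ must share the same set of prime divisors. The task is therefore to construct many pairs of primes $p,q$ with this matching-support property on their predecessors.

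To produce such pairs in bulk, I would fix a smooth squarefree modulus $L$ (a primorial of suitable size) and apply the Maynard--Tao theorem on primes in admissible tuples, in its arithmetic-progression form, choosing the base residue class $N\equiv 1\pmod{L}$ and an admissible $k$-tuple $\mathcal H=\{h_1,\dots,h_k\}$ with each $h_i\equiv 0\pmod{L}$; every prime $p=N+h_i$ produced then lies in $1+L\mathbb Z$. A further sieve restriction within the Maynard--Tao weights---forcing $(p-1)/L$ to be $L$-smooth, i.e.\ to have all its prime factors already among the prime divisors of $L$---then ensures $\rad{p-1}=\rad{L}$ for every prime produced in a single tuple, so that every pair from that tuple yields a radimichael semiprime.

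For the quantitative bound, Maynard--Tao applied with the parameter $m=2$ yields $\gg X/(\log X)^{k}$ base points $N\le X$ producing such a pair, and hence $\gg X/(\log X)^{k}$ distinct radimichael semiprimes $pq\le X^{2}$. Setting $x=X^{2}$ gives the claimed lower bound $\gg x^{1/2}/(\log x)^{k}=x^{1/2+o(1)}$ radimichael numbers up to $x$. The main obstacle is this additional sieve layer: Maynard--Tao natively produces primes in the arithmetic progression $1\pmod{L}$ but gives no control over which primes above $L$ can divide $p-1$, and grafting a smoothness sieve on $(p-1)/L$ onto the Maynard--Tao weights---while preserving enough level of distribution for the sieve machine to still deliver the $m=2$ conclusion---is where the technical heart of the proof will lie.
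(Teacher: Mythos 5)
Your reduction is sound: a squarefree semiprime $pq$ with $p\neq q$ cannot satisfy Korselt's criterion, and $\kappa(pq)\mid pq-1$ is indeed equivalent to $\rad{p-1}=\rad{q-1}$, so it suffices to produce $x^{1/2+o(1)}$ prime pairs with matching radicals of $p-1$ and $q-1$. The gap is in how you propose to produce them. Forcing every prime factor of $p-1$ to lie in a prescribed set (your condition that $(p-1)/L$ be $L$-smooth, so that $\rad{p-1}=\rad{L}$) is not a technical layer one can graft onto the Maynard--Tao weights: if $L$ is fixed, the integers $n\le X$ with $n-1$ having all prime factors dividing $L$ number only $(\log X)^{O(1)}$, and whether infinitely many of them are prime is an open problem generalizing the Fermat-prime question; if $L$ grows with $X$ as a primorial of the size your setup requires, the candidate set still has only $X^{o(1)}$ elements, so no sieve can extract $\gg X/(\log X)^{k}$ base points from it, and the claimed count $x^{1/2+o(1)}$ could not follow even if primality were obtained. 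In short, the step you defer as ``the technical heart'' is precisely the open problem, not a refinement of the level-of-distribution input.

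The paper avoids prescribing the radical at all. It applies Maynard's theorem (which allows distinct leading coefficients) to the tuple of forms $2n+1,\,2^{2}n+1,\ldots,2^{H_2}n+1$ in the single variable $n$. Any two primes $p=2^{i}n+1$, $q=2^{j}n+1$ from the same value of $n$ satisfy $\rad{p-1}=\rad{2n}=\rad{q-1}$ automatically, because the unknown part $n$ is \emph{shared} between the two shifted primes; moreover $pq\equiv 1\pmod{2n}$, so $\kappa(pq)\mid pq-1$ with no control whatsoever over the factorization of $n$. Maynard--Tao with $m=2$ then gives $\gg \sqrt{x}/\log^{H_2}x$ admissible values of $n\le \sqrt{x}/2^{H_2+1}$, each yielding a distinct non-Carmichael radimichael semiprime at most $x$, which is the stated bound. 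If you want to salvage your plan, replace the smoothness sieve by this structural trick: make the radicals equal by construction rather than by constraint.
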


For reference's sake, we note that the best known lower bound for the count of the Carmichael numbers up to $x$, due to Harmon \cite{Ha}, is $\gg x^{1/3}$.  This result then squares with our intuition that radimichael numbers should be easier to find using current mathematical technology.


Using our method of proof we are also able to give a more general result about $k$-Lehmer numbers.  The set of all radimichael numbers, $\bigcup_k L_k$, (where $L_k$ is the set of $k$-Lehmer numbers) is known to be infinite, since there are infinitely many Carmichael numbers.  However, the question of whether there are infinitely many $k$-Lehmer numbers for specific values of $k$ has been open.  Here, we resolve this question for nearly all values of $k$:


\begin{theorem} \label{thm:infkl}
For any $k\geq 3$, the set of $k$-Lehmer numbers which are not $(k-1)$-Lehmer, $L_k/L_{k-1}$ contains infinitely many integers with exactly $k-1$ prime factors.  In particular, the count of such numbers up to $x$ is at least $x^{\frac{1}{k-1}+o(1)}$ as $x \to \infty$.
\end{theorem}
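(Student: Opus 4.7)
My plan is to build $n$ as the product of exactly $k-1$ distinct primes of the form $q^{a_j}r+1$, for a fixed prime $q$ (say $q=2$) and an auxiliary integer $r$ supplied by Maynard--Tao. Let $K$ be a constant, depending only on $k$, for which the Maynard--Tao theorem guarantees at least $k-1$ primes among the values of any admissible $K$-tuple of linear forms infinitely often, and put $a_0=(k-1)K$. The forms I would use are $L_i(r)=q^{a_0+i-1}r+1$ for $i=1,\ldots,K$, sieved so that $r$ lies in a residue class modulo a fixed modulus $W$ chosen to make the tuple admissible and to ensure $\gcd(r,q)=1$ with every prime factor of $r$ exceeding $q^{a_0+K}$. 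Maynard--Tao then produces $\gg R/(\log R)^K$ valid $r\le R$ for which at least $k-1$ of the $L_i(r)$ are prime.

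For any such $r$, I would pick indices $i_1<\cdots<i_{k-1}$ at which $p_j:=L_{i_j}(r)$ is prime, set $a_j=a_0+i_j-1$, and form $n=p_1\cdots p_{k-1}$. Then $\varphi(n)=q^{\sum a_j}r^{k-1}$, whose prime divisors are just $q$ and the prime factors of $r$; since each $p_j\equiv1\pmod{qr}$, both $q$ and $r$ divide $n-1$, so the radimichael condition holds automatically. Expanding $n-1=r\sum_j q^{a_j}+O(r^2)$ and using the size condition on the prime factors of $r$ gives $v_q(n-1)=t$, the minimum of the $a_j$, and $v_\ell(n-1)=v_\ell(r)$ at each $\ell\mid r$. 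Writing $D=\sum_j(a_j-t)$, the ratio $v_q(\varphi(n))/v_q(n-1)$ equals $(k-1)+D/t$; the lower bound $D\ge\binom{k-1}{2}$ forces $\varphi(n)\nmid(n-1)^{k-1}$, while the upper bound $D\le(k-1)(K-1)\le a_0\le t$ forces $\varphi(n)\mid(n-1)^{k}$. At each prime $\ell\mid r$ the analogous ratio is exactly $k-1$, so no difficulty arises away from $q$.

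Since $n\asymp r^{k-1}$, counting $r\le x^{1/(k-1)}$ yields $\gg x^{1/(k-1)}/(\log x)^K=x^{1/(k-1)+o(1)}$ such $n\le x$, as claimed. The main obstacle I anticipate is that Maynard--Tao does not specify \emph{which} $k-1$ of the $K$ forms become prime for a given $r$; the generous choice $a_0=(k-1)K$ is designed precisely so that the ratio estimate holds uniformly over every $(k-1)$-subset of $\{1,\ldots,K\}$, and this uniformity is what allows the argument to proceed despite the indeterminacy in the Maynard--Tao output.
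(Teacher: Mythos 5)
Your construction is in essence the paper's own: the same tuple of forms $q^{a_0+i-1}r+1$ with the base exponent $a_0=(k-1)K$ taken at least as large as the maximal exponent spread, the same bookkeeping showing that the discrepancy $D=\sum_j(a_j-t)\le (k-1)(K-1)\le a_0\le t$ forces $\varphi(n)\mid (n-1)^k$, and the same use of the minimal exponent (via $D\ge\binom{k-1}{2}\ge 1$) to rule out $\varphi(n)\mid(n-1)^{k-1}$; the count is obtained identically. The paper phrases this as divisibility by powers of $a^{b+l_1}n$ rather than prime-by-prime valuations, which is why it needs no coprimality hypotheses at all.

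The genuine problem is the auxiliary sieving step, whose conclusions you do invoke. You cannot force ``every prime factor of $r$ exceeds $q^{a_0+K}$'' by placing $r$ in a single residue class modulo a fixed $W$ while keeping the full tuple admissible: take $q=2$ and $p=3$. For any $c\not\equiv 0\pmod 3$, exactly one of $c+1$ and $2c+1$ is divisible by $3$, so among any two consecutive forms $2^{a}c+1,\,2^{a+1}c+1$ one is always divisible by $3$; after substituting $r=Ws+c$ with $3\mid W$, the tuple has a fixed prime divisor and Maynard--Tao cannot be applied to it. The same phenomenon occurs at every odd prime $p\mid W$ for which $-c^{-1}$ lies in the subgroup generated by $2$, and since your $W$ must contain all primes up to $2^{a_0+K}$, salvaging enough surviving forms would require a careful (and unaddressed) choice of exponents. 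Fortunately the condition you were trying to enforce is unnecessary. Since each $p_j\equiv 1\pmod r$, you have $r\mid n-1$, hence $v_\ell(n-1)\ge v_\ell(r)$ for every $\ell\mid r$, and this one-sided inequality already gives $v_\ell(\varphi(n))=(k-1)v_\ell(r)\le v_\ell\bigl((n-1)^{k}\bigr)$; the exact equality $v_\ell(n-1)=v_\ell(r)$ is never needed, because the failure of $\varphi(n)\mid(n-1)^{k-1}$ is witnessed at the single prime $q$ alone, where only $\gcd(r,q)=1$ (a harmless extra congruence, or dispensable altogether as in the paper) enters. Deleting the large-prime-factor requirement and the mod-$W$ sieve repairs your argument, at which point it coincides with the paper's proof.
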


We remark that as in the previous theorem, the $k$-Lehmer numbers that we construct are not Carmichael numbers.

In some sense, this result is optimal; as Grau and Oller-Marc\'{e}n \cite{GOM} proved that there are no 2-Lehmer numbers with exactly two prime factors, we know that it would be impossible to prove a general result about $k$-Lehmer numbers with exactly $k$ prime factors.  This result also highlights the fact that $k=2$ is a particularly interesting case; unlike 1-Lehmer numbers, it is known that 2-Lehmer numbers exist, but we are still unable to prove that there are infinitely many of them.





We make extensive use of the theorem of Maynard and Tao, combined with a method first introduced in \cite{WrI} to reach our result.  The trick is to use $k$-tuples chosen so that any primes in the tuple can be multiplied to produce a non-Carmichael radimichael number.  The second theorem is then simply a matter of counting exactly what sorts of radimichael numbers we have.



\section{Invoking the Maynard-Tao Theorem}
Recall that Maynard-Tao gives us the following:
\begin{theorem}  \label{MT} Let $a,b\in \mathbb N$ be such that $a\geq 2$, $b\geq 0$.  Consider the admissible $k$-tuple
$$D_{\{a,b,s\}}(n)=(a^{b+1}n+1,a^{b+2} n+1,a^{b+3}n+1,....,a^{b+s} n+1).$$
Then for any $m$, there exists a constant $H_{m}$ such that if $s\geq H_{m}$ then at least $m$ of the $s$ terms above are prime for infinitely many values of $n$.  In particular, the number of values of $n$ up to $x$ for which $D_{a,s}(n)$ contains at least $m$ prime numbers is $\gg_m \frac{x}{\log^s x}$.
\end{theorem}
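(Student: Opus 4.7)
The plan is to deduce Theorem~\ref{MT} directly from the Maynard--Tao theorem on primes in admissible tuples. Recall the Maynard--Tao result: for every positive integer $m$ there is a constant $H_m$ such that, given any admissible $k$-tuple of distinct linear forms $(L_1,\ldots,L_k)$ in $\Z[n]$ with $k\geq H_m$, the number of $n\leq x$ for which at least $m$ of the values $L_i(n)$ are simultaneously prime is $\gg_m x/(\log x)^k$. It therefore suffices to check that the $s$-tuple
$$(L_1(n),\ldots,L_s(n)) := (a^{b+1}n+1,\,a^{b+2}n+1,\,\ldots,\,a^{b+s}n+1)$$
consists of distinct admissible linear forms with integer coefficients; the theorem will then follow by direct application.

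Distinctness is immediate, since $a\geq 2$ forces the leading coefficients $a^{b+i}$ to be pairwise distinct as $i$ ranges over $1,\ldots,s$. Each individual form $L_i$ has constant term $1$, which is coprime to its leading coefficient, so each form is a legitimate input to Maynard--Tao. For admissibility of the whole tuple, one must show that for every prime $p$ there is some residue $n_0 \pmod p$ with $p\nmid \prod_{i=1}^s L_i(n_0)$. My plan is to simply take $n_0\equiv 0\pmod p$, which yields $L_i(n_0)\equiv 1\pmod p$ for every $i$; no value is divisible by $p$, so the tuple misses at least one residue class modulo every prime, and admissibility is verified.

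Applying the Maynard--Tao theorem to this tuple with $s\geq H_m$ then produces at least $m$ primes among the $s$ values for infinitely many $n$, together with the quantitative lower bound $\gg_m x/(\log x)^s$ on the count of such $n\leq x$. I do not anticipate any serious obstacle to this approach: the only nontrivial ingredient is Maynard--Tao itself, used as a black box, and the admissibility check is trivial precisely because the common constant term $1$ is coprime to every prime. If any subtlety arises, it will be in making sure the cited version of Maynard--Tao gives the explicit quantitative count $\gg_m x/(\log x)^s$ (rather than merely the qualitative ``infinitely often'' conclusion), but this stronger form is standard in the literature.
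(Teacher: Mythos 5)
Your proposal is correct and matches the paper's treatment: the paper does not prove Theorem~\ref{MT} at all, but simply cites the general quantitative Maynard--Tao theorem for admissible tuples of linear forms from Maynard's \emph{Dense clusters of primes in subsets}, exactly the black box you invoke. Your explicit admissibility check (all forms equal $1$ at $n_0\equiv 0$, so every prime misses a residue class) is the one detail the paper leaves implicit, and it is correct.
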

The quantitative version stated here is proven in much greater generality in \cite{ma}.


We begin by showing that this result is sufficient to construct infinitely many radimichael numbers with any fixed number of prime factors; later, we will show that the numbers so constructed are not Carmichael.


\begin{lemma}\label{radm}
For any choice of $a,m \geq 2$, $b\geq 0$ let $s\geq H_{m}$, and let $n$ be such that the tuple $D_{\{a,b,s\}}(n)$ contains at least $m$ primes.  Label these primes $p_1<p_2<...<p_m$.  Then the quantity
$$N=p_1p_2...p_m.$$
is a radimichael number.
\end{lemma}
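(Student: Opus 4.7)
The plan is to unwind definitions: since each prime in the tuple has shape $a^{b+j}n+1$, both $\varphi(N)$ and $N-1$ should be controllable modulo $an$, and the relation $\kappa(N)\mid N-1$ should fall out cleanly. The real work is already done by Maynard--Tao; here we just verify the arithmetic.

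First I would observe that $N=p_1p_2\cdots p_m$ is squarefree, since the $p_i$ are distinct primes drawn from the tuple $D_{\{a,b,s\}}(n)$. Therefore
\[
\varphi(N)=\prod_{i=1}^{m}(p_i-1).
\]
Because each $p_i$ lies in the tuple, there exist indices $j_i\in\{1,\dots,s\}$ with $p_i-1=a^{b+j_i}n$. Consequently
\[
\varphi(N)=a^{mb+\sum_i j_i}\, n^{m},
\]
so the prime divisors of $\varphi(N)$ are exactly the prime divisors of $an$. This gives
\[
\kappa(N)=\rad{\varphi(N)}=\rad{an}.
\]

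Next I would control $N-1$ modulo $an$. Since $b\geq 0$ and each $j_i\geq 1$, the exponent $b+j_i\geq 1$, so $an\mid p_i-1$ for every $i$, i.e.\ $p_i\equiv 1\pmod{an}$. Multiplying,
\[
N=\prod_{i=1}^{m} p_i\equiv 1\pmod{an},
\]
so $an\mid N-1$, and hence $\rad{an}\mid N-1$.

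Combining the two steps, $\kappa(N)=\rad{an}\mid N-1$, which is exactly the radimichael condition. There is no real obstacle in this lemma: the content of the argument is in the clever choice of the admissible tuple $D_{\{a,b,s\}}(n)$, which forces every candidate prime to be $\equiv 1\pmod{an}$ while simultaneously forcing $\varphi(N)$ to have its radical supported on the primes of $an$. The heavy lifting---showing that the tuple actually produces $m$ simultaneous primes infinitely often---was carried out in Theorem~\ref{MT}; verifying that the resulting $N$ is not Carmichael is deferred to a later step.
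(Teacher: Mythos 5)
Your proof is correct and follows essentially the same route as the paper's: compute $\varphi(N)=\prod_i(p_i-1)$ as a power of $a$ times $n^m$ to get $\kappa(N)=\rad{an}$, then use $p_i\equiv 1\pmod{an}$ to conclude $an\mid N-1$. The only difference is cosmetic (you make the squarefreeness of $N$ and the exponent bookkeeping slightly more explicit), so there is nothing to add.
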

\begin{proof}
Let $p_1<p_2<...<p_m$ be as above.  Write
$$p_i=a^{l_i}n+1$$
for some $1\leq l_i\leq m$.  Then
$$\rad{p_i-1}=\rad{an}$$
for every $i$, and so
$$\kappa(N) = \rad{(p_1-1)(p_2-1)\cdots (p_m-1)} = \rad{an}$$
By construction, we have that
$$p_i\equiv 1\pmod{an},$$
which means that
$$N\equiv 1\pmod{an},$$
and so $\kappa(N)|N-1$.
\end{proof}

Because any Carmichael number must have at least 3 prime factors, this is already sufficient to prove Theorem \ref{thm:infrad}:
\begin{proof}[Proof of Theorem \ref{thm:infrad}]
Apply Lemma \ref{radm} in the case $a=2$, $b=0$, $m=2$.  For sufficiently large $x$, Theorem \ref{MT} implies that the count of the number of $n$ less than $\frac{x^{1/2}}{a^{H_2+1}}$ where at least two of the elements of $D_{\{2,0,H_2\}}(n)$ are prime is $\gg \frac{\sqrt{x}}{\log^{H_2} x}$.

For each such $n$, we use Lemma \ref{radm} to produce a radimichael number which is the product of two distinct primes, each less than or equal to $a^{H_2}n+1 \leq \sqrt{x}$.  Thus as $x \to \infty$ we obtain $x^{1/2+o(1)}$ radimichael numbers of size at most $x$.  Furthermore, none of these numbers are Carmichael as each has only two prime factors.
\end{proof}

\section{Radimichael but not Carmichael}

In this section, we prove that none of the radimichael numbers constructed using the method of Lemma \ref{radm} can be Carmichael numbers.  Since we can use our method to construct a radimichael number with $m$ prime factors for any value of $m\geq 2$, this then proves that there are infinitely many non-Carmichael radimichael numbers with any fixed number (at least two) of prime factors.

\begin{lemma} Let $N=p_1p_2....p_m$ be a radimichael number found by the method of Lemma \ref{radm}.  Then $N$ is not a Carmichael number.
\end{lemma}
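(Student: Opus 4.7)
The plan is to invoke Korselt's criterion and falsify the divisibility $(p_m-1)\mid(N-1)$ at the largest prime $p_m$; failing this one divisibility is already enough to rule out $N$ being Carmichael. My approach is purely $q$-adic: pick any prime $q$ dividing $a$ (such a $q$ exists since $a\geq 2$), and compare $v_q(N-1)$ with $v_q(p_m-1)$.

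Setting $e_i := v_q(p_i-1)$, the parametrization $p_i - 1 = a^{l_i} n$ from Lemma~\ref{radm} gives $e_i = l_i\, v_q(a) + v_q(n)$, and since $l_1<\cdots<l_m$ and $v_q(a)\geq 1$, the sequence $e_1<e_2<\cdots<e_m$ is strictly increasing with consecutive gaps of size at least $v_q(a)$. Now expand
\[
N - 1 \;=\; \prod_{i=1}^{m}\bigl(1+(p_i-1)\bigr) - 1 \;=\; \sum_{\emptyset \neq S\subseteq\{1,\ldots,m\}}\ \prod_{i\in S}(p_i-1).
\]
The $q$-adic valuation of the summand indexed by $S$ equals $\sum_{i\in S}e_i$, and the unique minimum over nonempty $S$ is $e_1$, attained only at $S=\{1\}$: any other singleton $\{j\}$ gives $e_j>e_1$, and any $S$ with $|S|\geq 2$ gives at least $e_1+e_2>e_1$. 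The elementary principle that the $q$-adic valuation of a sum equals that of the uniquely minimal term then yields $v_q(N-1) = e_1$.

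Combining these observations, $v_q(p_m-1) = e_m > e_1 = v_q(N-1)$, so $(p_m - 1)\nmid(N-1)$ and Korselt's criterion fails, proving $N$ is not Carmichael. I do not anticipate a real obstacle. The entire argument hinges on the strict monotonicity $e_1<\cdots<e_m$, which is built into the construction of Lemma~\ref{radm}, plus the standard valuation-of-a-sum principle. The one point that merits explicit attention is the uniqueness of the minimum in the expansion, and this is handled cleanly by the single inequality $e_1+e_2>e_1$.
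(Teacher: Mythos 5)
Your proof is correct and is essentially the paper's argument in $q$-adic dress: both apply Korselt's criterion and exploit the fact that in the expansion $N-1=\sum_{\emptyset\neq S}\prod_{i\in S}(p_i-1)$ the lowest term $p_1-1=a^{l_1}n$ dominates, so $N-1$ cannot be divisible by the larger quantities $a^{l_i}n$ for $i\geq 2$. The paper phrases this as $N\equiv p_1\not\equiv 1\pmod{a^{l_2}n}$ via the size bound $0<a^{l_1}n<a^{l_2}n$, whereas you localize at a prime $q\mid a$ and compare valuations to rule out $(p_m-1)\mid N-1$; the difference is cosmetic.
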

\begin{proof}
Let $N$ be such a radimichael number, constructed from a tuple $D_{\{a,b,s\}}(n)$ containing $m$ prime factors.  As before, we will write
$$p_i=a^{l_i}n+1$$
with $l_1 < l_2 < \ldots<l_m$. By Korselt's Criterion, we know that if $N$ is a Carmichael number then
$$a^{l_i}n|N-1$$
for each $i$.  Specifically that this implies that
$$N\equiv 1\pmod{a^{l_2}n}.$$
Now, for each $i\geq 2$, $p_i\equiv 1$ (mod $a^{l_2}n$).  So
$$N\equiv p_1\equiv 1\pmod{a^{l_2}n}.$$
But this is impossible, since $a^{l_2}n>p_1$ and $N>1$.
\end{proof}

\par \noindent \textit{Remark}: We note that this proof is similar in structure to a proof that appears in \cite{WrI}, wherein the author uses this method to prove that there cannot exist a Carmichael number $N$ such that the prime factors of $N$ are all Fermat primes.








\section{$k$-Lehmer with $k-1$ factors}

Let us now turn our attention to categorizing these radimichael numbers by their $k$-Lehmer properties.  We are now able to give a proof of Theorem \ref{thm:infkl}. In particular we show that for each $k\geq 3$ there exist infinitely many $k$-Lehmer numbers (which are neither $(k{-}1)$-Lehmer numbers nor Carmichael numbers) with $k-1$ prime factors and that the count of such integers up to $x$ is at least $x^{\frac{1}{k-1}+o(1)}$ as $x \to \infty$.

\begin{proof}[Proof of Theorem \ref{thm:infkl}]
Let $m=k-1$ and fix $a \geq 2$ an integer.  For $b=mH_{m}$, consider the tuple
$$D_{\{a,b,H_m\}}(n)=(a^bn+1,a^{b+1}n+1,a^{b+2}n+1,....,a^{b+H_{m}}n+1).$$
Theorem \ref{MT} implies that there are infinitely many $n$ for which at least $m$ of these forms are simultaneously prime infinitely often, and using Lemma \ref{radm} we see that the product of these $m$ primes is a radimichael number.  Moreover, as in the proof of Theorem \ref{thm:infrad}, we find that there are $\gg_m x^{1/m+o(1)}$ values of $n$ where all of the resulting $m$ primes have size at most $cx^{1/m}$, where $c$ is chosen in such a way that the product of the primes has size at most $x$.  We will now see that the resulting radimichael numbers are in fact $k$-Lehmer numbers.

For a set of primes $p_1$,\ldots,$p_m$ coming from this tuple, write $p_i=a^{b+l_i}n+1$ where $l_i<l_j$ if $i<j$.  Let $N=p_1\cdots p_m$.  Then
$$\varphi(N)=\varphi(p_1)\cdots \varphi(p_m)=a^{l_1+\cdots +l_m+mb}n^m.$$
Note that $l_1\geq 1$ and $l_m\leq H_{m}$.  Additionally, $N-1$ can be expanded out as
\begin{equation}
N-1=a^{mb+l_1+l_2+\cdots +l_m}n^m+\cdots +\left(\sum_{i=1}^m a^{b+l_i}\right)n. \label{nminus}
\end{equation}

We show first that $\varphi(N)|(N-1)^{m+1}$.  It is clear that $a^{b+l_1}n$ divides every term of the expression above.  So
$$a^{b+l_1}n|N-1,$$
and hence
$$(a^{b+l_1}n)^j|(N-1)^j.$$
Now, if we let $j=m+1$ then
$$a^{l_1+\cdots+l_m+mb}n^m|(a^{b+l_1}n)^{m+1},$$
since $\sum_{i}l_i< mH_m = b$.  So $\varphi(N)|(N-1)^{m+1}$ and thus $N$ is a $k$-Lehmer number.

On the other hand, raising equation \eqref{nminus} to the $m$ and expanding, we can write
\[(N-1)^{m} = a^{mb+ml_1}n^{m}+ Y\]
where $a^{mb+ml_1+1}n^m|Y$. Thus $(N-1)^m$ is not divisible by $a^{mb+ml_1+1}n^m$ while $\varphi(N)$ is, so $N$ cannot be a $(k{-}1)$-Lehmer number.

\end{proof}

Finally, we note that in contrast to the construction here showing that there are at least $x^{\frac{1}{k-1}+o(1)}$ $k$-Lehmer numbers with $k-1$ prime factors, it was shown in \cite{McN} that the number of $k$-lehmer numbers up to $x$ is $\ll_k x^{1-\frac{1}{4k-1}}$, and that the number of radimichael numbers with $m$ prime factors is $\ll x^{1-\frac{1}{2m}}$.  In the special case that $m=2$ the count of radimichael numbers with 2 prime factors is less than \[x^{1/2}\exp\left\{\frac{2(2\log x)^{1/2}}{\log \log x}\left(1+O\left(\frac{1}{\log \log x}\right)\right)\right\}=x^{1/2+o(1)}\]
as $x \to \infty$. So, our lower bound for the count of radimichael numbers with two prime factors is nearly optimal.








\end{document}